\title{Limits of compact decorated graphs\footnote{AMS
Subject Classification: Primary 05C99, Secondary 82B99}}
\author{{\sc L\'aszl\'o Lov\'asz}\footnote{Research supported by
OTKA grant No. 77780 and ERC Advanced research grant No. 227701},
E\"otv\"os Lor\'and University, Budapest\\and\\
{\sc Bal\'azs Szegedy}, University of Toronto, Toronto}
\date{Oct 2010}
\def\CC{{\cal C}}\def\FF{{\cal F}}
\def\GG{{\cal G}}\def\KK{{\cal K}}
\def\PP{{\cal P}}
\def\SS{{\cal S}}\def\WW{{\cal W}}
\def\R{{\mathbb R}}
\def\Z{{\mathbb Z}}
\def\N{{\mathbb N}}
\def\C{{\mathbb C}}
\def\eps{\varepsilon}
\def\hom{{\rm hom}}
\newtheorem{theorem}{Theorem}[section]
\newtheorem{lemma}[theorem]{Lemma}
\newtheorem{example}[theorem]{Example}
\newtheorem{definition}[theorem]{Definition}
\long\def\killtext#1{}
\newenvironment{proof}{\medskip\noindent{\bf Proof. }}{\hfill$\square$\medskip}
\newenvironment{proof*}[1]{\medskip\noindent{\bf Proof of #1. }}{\hfill$\square$\medskip}
\begin{document}

\maketitle

\tableofcontents

\begin{abstract}
Following a general program of studying limits of discrete
structures, and motivated by the theory of limit objects of converge
sequences of dense simple graphs, we study the limit of graph
sequences such that every edge is labeled by an element of a compact
second-countable Hausdorff space $\KK$. The "local structure" of
these objects can be explored by a sampling process, which is shown
to be equivalent to knowing homomorphism numbers from graphs whose
edges are decorated by continuous functions on $\KK$. The model
includes multigraphs with bounded edge multiplicities, graphs whose
edges are weighted with real numbers from a finite interval,
edge-colored graphs, and other models. In all these cases, a limit
object can be defined in terms of 2-variable functions whose values
are probability distributions on $\KK$.
\end{abstract}

\section{Introduction}

This paper fits into a general program 
in the frame of which limits of discrete structures are studied. The
main assumption is that the objects have a "local structure" which
can be explored by certain sampling processes.

A typical example is subsets of integer intervals. If
$H\subseteq\{1,2,3,\dots,n\}$ is such an object then we can sample
$k$ consecutive elements from $\{1,2,\dots,n\}$ uniformly at random
and take the intersection of $H$ with it. The corresponding
convergence notion leads to limit objects that are shift invariant
measures on the compact space $\{0,1\}^\mathbb{Z}$; these measures
are important in ergodic theory.

Another example, more relevant for us, is the set of finite simple
graphs. For every natural number $k$ there is a sampling process in
which we pick $k$ random nodes and look at the subgraph induced by
them. A sequence $G_1,G_2,\dots$ of simple graphs with
$|V(G_n)|\to\infty$ is called {\it convergent} if the distribution of
this random induced subgraph is convergent for every $k$. To every
convergent sequence of simple graphs one can assign a limit object in
the form of a 2-variable real function \cite{LSz1} (see below).

Our main goal is to generalized these results to limits of multigraph
sequences, moments indexed by multigraphs, and beyond. We study the
limit of graph sequences such that every edge is labeled by an
element of a fixed second-countable compact Hausdorff space $\KK$.
This includes weighted graphs with bounded edge-weights, or
multigraphs with bounded edge multiplicities.

We define convergence of compact decorated graph sequences, and
construct limit objects for such sequences. We introduce a notion of
homomorphism numbers into such graphs, and show that convergence can
be characterized in terms of them.

\section{Overview of results}

\subsection{Decorated graphs}

In this section we develop a formalism which provides a unified
treatment for many similar problems.

Let $S$ be an arbitrary set. For $n\in\Z_+$, a symmetric map
$G:~[n]\times [n]\to S$ (i.e., a map satisfying $G(x,y)=G(y,x)$) is
called an {\it $S$-decorated graph}. We can also think of this as an
assignment of an element of $S$ to every edge of $\tilde K_n$, the
complete graph on the node set $[n]=\{1,2,\dots,n\}$ with a loop edge
on every node; alternatively, $G$ is an $n\times n$ symmetric matrix
with entries from $S$. We denote by $\GG_n(S)$ the set of all
$S$-decorated graphs with $n$ nodes, and set $\GG(S)=\cup_{n\ge 0}
\GG_n(S)$. Often $S$ will have a special element $0$, and an edge
decorated with $0$ will be considered as missing. Often our graphs
will have no loops, i.e., $G(x,x)=0$ for all $x\in[n]$.

If $S$ is finite, then so is $\GG_n(S)$. If $S$ is a topological
space then, for every $n$, $\GG_n(S)$ is a topological spaces with
the product topology. If, in addition, $S$ is compact, then so is
$\GG_n(S)$.

\subsection{Sampling}

For every natural number $k$ and $G\in\GG(S)$ there is a {\it
sampling process} $\mathbb{G}(G,k)$ which is a random variable whose
values are in $\GG_k(S)$, and is defined as follows. We pick a random
ordered set of $k$ nodes $\{v_1,v_2,\dots,v_k\}$ uniformly from $G$
and then we create a graph $F=\mathbb{G}(G,k)\in\GG_k(S)$ on the node
set $\{1,2,\dots,k\}$ such that $F_{i,j}$ is $G_{v_i,v_j}$ for every
$1\leq i<j\leq k$.

While $\mathbb{G}(G,k)$ comes with labeled nodes, it is clear that
this graph with any other labeling of its nodes arises with the same
probability.

\subsection{Subgraph densities and moments}

From now on, $\KK$ denotes a compact separable topological space, and
we consider $\KK$-decorated graphs. Let $\CC$ denote the family of
continuous real valued functions on $\KK$, and let $\FF\subseteq\CC$.
For an $\FF$-decorated graph $F\in\GG_k(\FF)$ and a $\KK$-decorated
graph $G\in\GG(\KK)$, we introduce the weight $w(f)$ of a function
$f:~[k]\mapsto V(G)$ by
\[
w(f)=\prod_{1\leq i<j\leq k}F_{i,j}(G_{f(i),f(j)}).
\]
The {\it homomorphism number} $\hom(F,G)$ is
\[
\hom(F,G)=\sum_{f:[k]\mapsto V(G)}w(f).
\]
We also define the {\it homomorphism density} by
\[
t(F,G)=\frac{\hom(F,G)}{|V(G)|^k},
\]
which is the expected value $E(w(f))$ for a random map $f:~[k]\mapsto
V(G)$.

\subsection{Convergence}

Our goal is to study the following convergence notion.

\begin{definition}
An infinite sequence $G_1,G_2,\dots$ of $\KK$-decorated graphs is
called {\it convergent} if $|V(G_i)|\to\infty$, and for every $k$ the
sampling processes $\{\mathbb{G}(G_i,k)\}_{i=1}^\infty$ are weakly
convergent in distribution. This means that for every
$k\in\mathbb{N}$ and continuous function
$f:~\GG_k(\KK)\mapsto\mathbb{R}$ the limit
$\lim_{i\to\infty}E(f(\mathbb{G}(G_i,k)))$ exists.
\end{definition}

We are going to characterize convergence of a graph sequence in terms
of homomorphism numbers from $\CC$-decorated graphs. Since there are
generally too many such graphs, we will also show that we can
restrict ourselves to graphs decorated by elements from an
appropriate subset of $\CC$. We need the following definition.

\begin{definition}
We say that a set $\FF\subseteq\CC$ is {\it dense} if for every
$\epsilon>0$ and $f\in\CC$ there is an $g\in\FF$ such that
$|g(x)-f(x)|\leq\epsilon$ for every $x\in\KK$. We say that
$\FF\subseteq\CC$ is a {\it generating system} if the linear space
generated by the elements of $\FF$ is dense.
\end{definition}

We will prove the following theorem.

\begin{theorem}[Equivalence of convergence notions]\label{convequiv}
Let $(G_1,G_2,\dots)$ be a sequence of $\KK$-decorated graphs with
$|V(G_n)|\to\infty$, and let $\FF$ be a generating system. Then the
following are equivalent:

\smallskip

{\rm (i)} $(G_1,G_2,\dots)$ is convergent;

\smallskip

{\rm (ii)} For every $\CC$-decorated graph $F$, the numerical
sequence $t(F,G_n)$ is convergent;

\smallskip

{\rm (iii)} For every $\FF$-decorated graph $F$, the numerical
sequence $t(F,G_n)$ is convergent.
\end{theorem}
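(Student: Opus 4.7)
The plan is to prove (ii) $\Rightarrow$ (iii) $\Rightarrow$ (i) $\Rightarrow$ (ii). The implication (ii) $\Rightarrow$ (iii) is trivial, since $\FF\subseteq\CC$. The central bridge tying the remaining implications together is the identity
\[
t(F,G_n) \;=\; E\bigl(h_F(\mathbb{G}(G_n,k))\bigr) + \eta_n,
\]
valid for every $F\in\GG_k(\CC)$, where $h_F:\GG_k(\KK)\to\R$ is the continuous function $h_F(H)=\prod_{i<j}F_{i,j}(H_{i,j})$, and $\eta_n=O(k^2/|V(G_n)|)\to 0$ accounts for the mismatch between a uniform random map (with possible repetitions) used in the definition of $t$ and the uniform ordered sample of distinct nodes used in $\mathbb{G}(\cdot,k)$. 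The functions $F_{i,j}$ are bounded on the compact space $\KK$, so $h_F$ is bounded and $\eta_n$ is indeed negligible. From this bridge, (i) $\Rightarrow$ (ii) follows by specializing the continuous test function in (i) to $h=h_F$.

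For the main implication (iii) $\Rightarrow$ (i), fix $k$ and note that $\GG_k(\KK)$ is homeomorphic to a finite power of $\KK$, with coordinates indexed by the pairs $(i,j)$. First I would apply the Stone--Weierstrass theorem to the unital subalgebra $\AA\subseteq C(\GG_k(\KK))$ generated by the coordinate pullbacks $H\mapsto g(H_{i,j})$ with $g\in\CC$: it contains constants (take $g\equiv 1$) and separates points, since distinct elements of $\GG_k(\KK)$ differ in some coordinate and Urysohn's lemma on the compact Hausdorff space $\KK$ produces a continuous $g\in\CC$ distinguishing them. Hence $\AA$, which equals the linear span of products $\prod_{i<j} g_{i,j}(H_{i,j})$ with $g_{i,j}\in\CC$, is uniformly dense in $C(\GG_k(\KK))$. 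Next, using the generating system hypothesis (so that the linear span of $\FF$ is uniformly dense in $\CC$), I approximate each $g_{i,j}\in\CC$ by some $g'_{i,j}$ in the linear span of $\FF$; a standard telescoping estimate, exploiting the uniform boundedness of these functions on $\KK$, controls $\|\prod g_{i,j} - \prod g'_{i,j}\|_\infty$. Expanding the product $\prod g'_{i,j}$ multilinearly places it in the linear span $\AA'$ of products $\prod f_{i,j}(H_{i,j})$ with $f_{i,j}\in\FF$. Therefore $\AA\subseteq\overline{\AA'}$, and so $\AA'$ is itself uniformly dense in $C(\GG_k(\KK))$.

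With density in hand, (i) follows from a standard three-epsilon argument: given continuous $h:\GG_k(\KK)\to\R$ and $\epsilon>0$, choose $\ell\in\AA'$ with $\|h-\ell\|_\infty\leq\epsilon$. Then $\ell$ is a finite linear combination of functions $h_F$ with $F\in\GG_k(\FF)$, so by (iii) and the bridge identity $E(\ell(\mathbb{G}(G_n,k)))$ converges in $n$; combining with $\|h-\ell\|_\infty\leq\epsilon$ shows $E(h(\mathbb{G}(G_n,k)))$ is Cauchy, yielding (i). The main obstacle is the Stone--Weierstrass density argument together with the passage from $\CC$-products to $\FF$-products via multilinear expansion; the sampling-correction term $\eta_n$ is routine but must be tracked consistently.
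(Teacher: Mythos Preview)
Your proof is correct and follows essentially the same route as the paper: the paper packages your Stone--Weierstrass density argument and the multilinear passage from $\CC$-products to $\FF$-products into two preparatory lemmas (showing first that $\CC_{\binom{k}{2}}$, and then $\FF_{\binom{k}{2}}$, is a generating system on $\KK^{\binom{k}{2}}$), and then invokes the same bridge identity $t(F,G_i)=E(L(\mathbb{G}(G_i,k)))$. Your explicit correction term $\eta_n$ for the injective-versus-all-maps discrepancy is a detail the paper's proof glosses over.
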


\subsection{Limit objects}\label{LIMITS}

Let $\mathcal{P}(\KK)$ denote the set of probability Borel measures
on the compact space $\KK$. The Riesz representation theorem implies
that the set $\mathcal{P}(\KK)$ is a compact topological space with
the weak topology. (Recall that the weak topology is the weakest
topology such that the function $\mu\mapsto\int_{\KK}f~d\mu$ is
continuous for every continuous function $f:~\KK\to\mathbb{R}$.)

\begin{definition}
We denote by $\WW(\KK)$ the set of two variable Borel measurable
functions $W:~[0,1]^2\mapsto \mathcal{P}(\KK)$ such that
$W(x,y)=W(y,x)$ for every $(x,y)\in[0,1]^2$. Elements of
$\mathcal{W}(\KK)$ will be called $\KK$-{\it graphons}.
\end{definition}

An important fact about $\KK$-graphons is that they can be described
by sequences of real-valued measurable functions in the following
way. Let $W$ be a $\KK$-graphon and let $f\in\CC$. Define
$W_f:~[0,1]^2\mapsto\mathbb{R}$ by $W_f(x,y)=\int_{\KK}f~dW(x,y)$.
The function $W_f$ is a bounded measurable function taking values
between $\min(f)$ and $\max(f)$. The Riesz representation theorem
implies that the measures $W(x,y)$ are reconstructible from the
sequence $\{W_f(x,y)\}_{f\in\FF}$ if $\FF$ is a generating system.

We will say that $(W_f:~f\in\FF)$ is the {\it $\FF$-moment
representation} of $W$. The name refers to the fact that for various
natural choices of $\KK$ and $\FF$, the numbers $t(F,G)$ ($F\in\FF$)
behave similarly to the moments of a single-variable function. This
analogy is explained and exploited in \cite{LSz7}.

Every $\KK$-decorated graph $G$ gives rise to a $\KK$-graphon $W_G$
as follows. Let $V(G)=[n]$. We split the unit interval into $n$
intervals $J_1,\dots,J_n$ of length $1/n$, and let $W_G(x,y)=G(i,j)$
for $x\in J_i, y\in J_j$ (here we identify the element $G(i,j)\in\KK$
with the distribution concentrated on $G(i,j)$).

For every $\KK$-graphon $W$ and $\CC$-decorated graph $F$ we
introduce the homomorphism density $t(F,W)$ by
\[
t(F,W):=\int\limits_{x_1,x_2,\dots,x_k\in[0,1]}\prod_{1\leq i<j\leq
k}W_{F_{i,j}}(x_i,x_j)~dx_1~dx_2\dots dx_k.
\]
It is easy to see that for every $\KK$-decorated graph $G$ and
$\CC$-decorated graph $F$,
\[
t(F,W_G)=t(F,G).
\]
Note that if $F$ is $\FF$-decorated for some $\FF\subseteq\CC$, then
$t(F,W)$ is expressed in terms of the $\FF$-moment representation of
$W$.

We will prove that the limit of a convergent sequence of
$\KK$-decorated graphs can be represented by a $\KK$-graphon:

\begin{theorem}\label{convgen}
Let $\FF$ be a countable generating set and let $(G_1,G_2,\dots)$ be
a convergent sequence of $\KK$-decorated graphs. Then there is a
$\KK$-graphon $W$ such that $t(F,G_n)\to t(F,W)$ for every
$\CC$-decorated graph $F$.
\end{theorem}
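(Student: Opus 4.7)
The plan is to recover the limit $\KK$-graphon $W$ from its real-valued ``moments'' $W_f=\int f\,dW$, $f\in\CC$. Because the weak topology on $\mathcal{P}(\KK)$ is generated by integrals against continuous functions, and because $\FF$ is a countable generating system, $W$ will be reconstructible pointwise from a countable collection $(U_f)_{f\in\FF_0}$ of real-valued graphons, where $\FF_0$ denotes the $\Q$-linear span of $\FF\cup\{1_{\KK}\}$. The Riesz representation theorem will then reassemble the probability measures $W(x,y)\in\mathcal{P}(\KK)$ on a conull set.

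By Theorem~\ref{convequiv}, $t(F,G_n)$ converges for every $\FF_0$-decorated graph $F$. For each $f\in\FF_0$, composing $f$ pointwise with $W_{G_n}$ yields a symmetric real-valued measurable function $W_{G_n,f}:[0,1]^2\to\R$ with $|W_{G_n,f}|\le\|f\|_\infty$. Using the compactness theorem for bounded real graphons from \cite{LSz1} together with a diagonal argument over the countable family $\FF_0$, I would extract a subsequence (still denoted $G_n$) and symmetric real-valued measurable functions $U_f$ on $[0,1]^2$ such that, for every $\FF_0$-decorated graph $F$ on $[k]$,
\[
t(F,G_n)\;\longrightarrow\;\int_{[0,1]^k}\prod_{1\le i<j\le k}U_{F_{i,j}}(x_i,x_j)\,dx_1\cdots dx_k.
\]
The main technical subtlety lies here: a single measure-preserving rearrangement of $[0,1]$ must work simultaneously for all countably many $f\in\FF_0$. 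This can be arranged by applying compactness in ``jointly decorated'' form (treating the tuple $(W_{G_n,f})_{f\in\FF_0}$ as a single object) rather than one $f$ at a time, or equivalently by invoking an Aldous--Hoover-type representation for the exchangeable $\KK$-array produced by sampling from the $G_n$.

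Next, the identities $W_{G_n,af+bg}=aW_{G_n,f}+bW_{G_n,g}$ (for $a,b\in\Q$, $f,g\in\FF_0$), $W_{G_n,f}\ge 0$ whenever $f\ge 0$, and $W_{G_n,1}\equiv 1$ pass to the limit almost everywhere on $[0,1]^2$. Taking the union of the countably many resulting null sets produces a single null set $N\subseteq[0,1]^2$ off which the assignment $f\mapsto U_f(x,y)$ is a $\Q$-linear, positive, normalized functional on $\FF_0$. By density of $\FF_0$ in $\CC$ this functional extends uniquely to a positive linear functional of norm $1$ on $\CC$, and the Riesz representation theorem furnishes a unique $W(x,y)\in\mathcal{P}(\KK)$ with $\int f\,dW(x,y)=U_f(x,y)$ for every $f\in\CC$. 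On $N$ I would set $W$ to an arbitrary fixed probability measure. Symmetry of $W$ follows from that of the $U_f$, and Borel measurability of $W:[0,1]^2\to\mathcal{P}(\KK)$ reduces, via the defining weak topology on $\mathcal{P}(\KK)$, to the measurability of each $W_f$; this holds for $f\in\FF_0$ by construction and extends to all $f\in\CC$ by uniform approximation from $\FF_0$.

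Finally, for every $\FF_0$-decorated $F$ the convergence $t(F,G_n)\to t(F,W)$ is immediate from the construction. For a general $\CC$-decorated $F$ it follows by uniformly approximating each edge-decoration $F_{i,j}$ by an element of $\FF_0$ and observing that both $t(F,G_n)$ and $t(F,W)$ vary continuously in the decorations. Since $(G_n)$ was assumed convergent at the outset, the passage to a subsequence during construction is harmless: the whole original sequence converges to the limit $t(F,W)$.
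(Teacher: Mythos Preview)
Your outline is correct and follows the same architecture as the paper: pass to the real-valued moment functions $W_{G_n,f}$, obtain simultaneous limits $U_f$ compatible under a \emph{single} rearrangement of $[0,1]$, and reassemble a $\KK$-graphon from them. The paper packages the reconstruction step as Lemma~\ref{LEM:F-MOM}, using that the set $Q\subseteq\R^{\FF}$ of $\FF$-moment sequences is compact, convex, and preserved under averaging and pointwise limits; this is equivalent to your positivity/linearity/Riesz argument on $\FF_0$.

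The one place where your sketch leaves real work undone is precisely the step you flag: a diagonal argument over $f\in\FF_0$ applied to the scalar compactness theorem of \cite{LSz1} does \emph{not} produce a common alignment, because each application may demand its own measure-preserving map. The paper's replacement is Lemma~\ref{simcon}: at stage $t$ it applies the simultaneous weak regularity Lemma~\ref{WEAK-SZEM-W} to the first $t$ functions $(s_n)_{f_1},\dots,(s_n)_{f_t}$ to obtain one common partition $\PP_t$ refining $\PP_{t-1}$, performs a single rearrangement so that $\PP_t$ consists of intervals, passes to a subsequence on which the resulting stepfunctions converge, and lets $t\to\infty$ via bounded-martingale convergence; Lemma~\ref{HOMSQUARE} then transfers $\|\cdot\|_\square$-closeness to closeness of all $t(F,\cdot)$. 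This is exactly the ``jointly decorated compactness'' you allude to. Your Aldous--Hoover alternative is a genuinely different route that also works, but it would need to be written out to stand as a proof here.
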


We in fact prove a more general theorem about convergence of
$\KK$-graphons.

\begin{theorem}\label{convgen1}
Let $\FF$ be a countable generating set and let $W_1,W_2,\dots$ be a
sequence of $\KK$-graphons such that $(t(F,W_n):~n=1,2,\dots)$ is a
convergent sequence for every $F\in\GG(\FF)$. Then there is a
$\KK$-graphon $W$ such that $t(F,W_n)\to t(F,W)$ for every
$F\in\GG(\CC)$.
\end{theorem}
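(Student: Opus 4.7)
The plan is to realize the limit as a dissociated exchangeable $\KK$-valued array and invoke the Aldous--Hoover representation theorem. For each $n$, associate to $W_n$ the random symmetric array $X_n=(X_n(i,j))_{i,j\in\N}$ obtained by sampling $\alpha_1,\alpha_2,\ldots$ i.i.d.\ uniform on $[0,1]$ and, conditionally on the $\alpha_i$, sampling $X_n(i,j)\sim W_n(\alpha_i,\alpha_j)$ independently for $i<j$. For any $F\in\GG_k(\CC)$ with $F_{ij}=f_{ij}$,
\[
t(F,W_n)=\E\Bigl[\prod_{1\le i<j\le k}f_{ij}(X_n(i,j))\Bigr].
\]
By assumption the right-hand side converges whenever each $f_{ij}\in\FF$. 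Multilinearity in the $f_{ij}$ together with uniform approximation (valid because each $X_n(i,j)$ takes values in the compact space $\KK$ and $\FF$ is generating) extends this convergence to all $f_{ij}\in\CC$. Linear combinations of tensor products $\prod_e f_e$ with $f_e\in\CC$ form a subalgebra of $\CC(\KK^{\binom{k}{2}})$ separating points and containing the constants; by Stone--Weierstrass, a further uniform approximation yields weak convergence of the finite-dimensional laws of the $X_n$ on $\KK^{\binom{k}{2}}$.

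Consistency across $k$ together with Kolmogorov extension on the compact space $\KK^{\N^2}$ produces an exchangeable symmetric $\KK$-valued array $X_\infty$ realizing these finite-dimensional distributions. Because any edge decorated by the constant function $1$ contributes trivially to the integrand defining $t(\cdot,W_n)$, we have $t(F\sqcup G,W_n)=t(F,W_n)\,t(G,W_n)$ for any two $\CC$-decorated graphs on disjoint vertex sets, and this identity passes to the limit. This expresses that $X_\infty$ is \emph{dissociated}: for disjoint finite $A,B\subset\N$, the sub-arrays $X_\infty|_{A\times A}$ and $X_\infty|_{B\times B}$ are independent.

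In the dissociated (equivalently, ergodic) case, the Aldous--Hoover theorem applied to the standard Borel target $\KK$ produces a measurable map $\Phi:[0,1]^3\to\KK$ with $\Phi(x,y,z)=\Phi(y,x,z)$ such that $(X_\infty(i,j))_{i,j\in\N}$ has the same law as $(\Phi(\alpha_i,\alpha_j,\lambda_{ij}))_{i,j\in\N}$, where the $\alpha_i$ and $\lambda_{ij}=\lambda_{ji}$ are independent uniform $[0,1]$ variables. Define $W:[0,1]^2\to\mathcal{P}(\KK)$ by letting $W(x,y)$ be the law of $\Phi(x,y,\lambda)$ for $\lambda$ uniform on $[0,1]$; symmetry is immediate, and measurability into $\mathcal{P}(\KK)$ reduces via the Riesz representation theorem to measurability of $W_f(x,y)=\int_0^1 f(\Phi(x,y,z))\,dz$ for each $f\in\CC$, which follows from Fubini. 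Thus $W\in\WW(\KK)$, and for any $F\in\GG(\CC)$,
\[
t(F,W)=\E\Bigl[\prod_{i<j}f_{ij}(\Phi(\alpha_i,\alpha_j,\lambda_{ij}))\Bigr]=\E\Bigl[\prod_{i<j}f_{ij}(X_\infty(i,j))\Bigr]=\lim_{n\to\infty}t(F,W_n).
\]

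The main technical obstacle is the appeal to Aldous--Hoover in the ergodic setting for a general compact target $\KK$, coupled with the verification that the limiting array is dissociated; this last point relies on the observation that densities multiply over disjoint unions in the decorated setting, provided the constant function $1$ is used to decorate the missing edges. Once these ingredients are in place, the construction of $W$ and the verification of the density identity are routine.
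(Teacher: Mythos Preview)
Your argument is correct and takes a genuinely different route from the paper's.  The paper works analytically and constructively: it passes to the $\FF$-moment representations $s_n=((W_n)_f:f\in\FF)$, applies the weak (Frieze--Kannan) regularity lemma simultaneously to finitely many of these real-valued functions to obtain nested equipartitions, and uses the bounded martingale convergence theorem to produce a limiting $\FF$-moment function sequence $w$, which is then reinterpreted as a $\KK$-graphon via Lemma~\ref{LEM:F-MOM} (Riesz representation pointwise).  No probability theory beyond martingales is used, and the argument yields explicit finitary partitions approximating the limit.

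Your proof is probabilistic and non-constructive: you encode each $W_n$ as a jointly exchangeable $\KK$-valued array, pass to a limiting array law via Stone--Weierstrass and Kolmogorov extension, verify dissociation through the multiplicativity $t(F\sqcup G,W_n)=t(F,W_n)\,t(G,W_n)$ (which is indeed valid in the decorated setting once the cross-edges are decorated by the constant $1$), and then invoke the ergodic case of the Aldous--Hoover representation for Polish-valued arrays to extract $\Phi$ and hence $W$.  This places the theorem squarely in the exchangeable-arrays framework (as Diaconis--Janson and Austin did for simple graphs), avoids any regularity lemma, and makes the extension from $\FF$- to $\CC$-decorated test graphs transparent via the density argument you give.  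The price is that Aldous--Hoover for standard Borel targets, together with the fact that dissociation is equivalent to the absence of the global Aldous--Hoover variable, is a substantial black box compared with the elementary tools the paper uses; in return you get a shorter and more conceptual proof that transfers immediately to other compact targets without reworking the regularity machinery.
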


\subsection{Examples}

\begin{example}[Simple graphs]\label{EXA:SGRAPH}
Let $\KK$ be the discrete space with two elements called ``edge'' and
``non edge'' or shortly $1$ and $0$. The set $\CC$ consists of all
maps $\{0,1\}\to\R$, i.e., of all pairs $(f(0),f(1))$ of real
numbers. A natural generating subset (in fact, a basis) in $\CC$
consists of the pairs $f_0=(1,1)$ and $f_1=(0,1)$. Sampling,
convergence, and homomorphism densities correspond to these notions
introduced for simple graphs.

Every probability distribution on $\KK$ can be represented by a
number between $0$ and $1$ which is the probability of the element
``edge''. So a $\KK$-graphon is described by a symmetric measurable
function $W:~[0,1]^2\mapsto [0,1]$. This has been the motivating
example worked out in \cite{LSz1,BCLSSV,BCLSV1}.

One may, however, take another basis in $\CC$, namely the pair
$g_0=(0,1)$ and $g_1=(1,0)$. Then again $\FF$-decorated graphs can be
thought of as simple graphs, and $\hom(F,G)$ counts the number of
maps that preserve both adjacency and non-adjacency.
\end{example}

\begin{example}[Multicolored graphs]\label{EXA:MGRAPH}
Let $\KK$ be a finite set of ``colors'' with the discrete topology.
Continuous functions on $\KK$ can be thought of as vectors in
$\R^\KK$. The standard basis $\FF$ in this space corresponds to
elements of $\KK$, and so $\FF$-decorated graphs are just the same as
$\KK$-decorated graphs. The moment $t(F,G)$ is the probability that a
random map $V(F)\to V(G)$ preserves edge colors.

Probability distributions on $\KK$ can be described by the
probabilities of its points. So a $\KK$-graphon is represented by $k$
measurable functions $w_i:~[0,1]^2\mapsto[0,1]$ with $\sum_i
w_i(x,y)=1$.
\end{example}

\begin{example}[Multigraphs]\label{EXA:MULTGRAPH}
Let $G$ be a multigraph with edge multiplicities at most $d$. Then
$G$ can be thought of as a $\KK$-decorated graph, where
$\KK=\{0,1,\dots,d\}$. This seems equivalent to example
\ref{EXA:MGRAPH}; however, it is more interesting to take a different
basis in $\R^\KK$ in this case, namely the functions
$\FF=\{1,x,\dots,x^d\}$. We can represent an $\FF$-decorated graph by
a multigraph with edge multiplicities at most $d$, where an edge
decorated by $x^i$ is represented by $i$ parallel edges. The
advantage of this is that $\hom(F,G)$ is then the number of
homomorphisms of $F$ into $G$ as multigraphs.
\end{example}

\begin{example}[Parallel colored graphs]\label{EXA:PMGRAPH}
This example is basically the same as Example \ref{EXA:MGRAPH} in the
sense that we choose $\KK$ to be finite; however we allow an edge to
carry more than one color. We take $\KK=\{0,1\}^n$, and let
$(e_1,e_2,\dots,e_n)\in\KK$ mean that an edge has color $i$ if and
only if $e_i=1$.

We have a fairly nice basis $\FF$: For every vector
$x=(x_1,x_2,\dots,x_n)\in\{0,1\}^n$ we construct a function
$f_x:~\KK\mapsto\{0,1\}$ such that $f_x(c_1,c_2,\dots,c_n)=1$ if and
only of $x_i=1$ implies $c_i=1$. These functions form a basis in
$\R^\KK$.

Limit objects are more complex. The most natural way is to use
$2^n-1$ measurable functions $w:~[0,1]^2\mapsto [0,1]$ whose sum is
between $0$ and $1$.

The main application of this example is that it allows us to study a
parallel limit of many graphs on the same node set.
\end{example}

\begin{example}[Infinitely many parallel graphs]\label{EXA:INFGRAPH}
The previous example can be further generalized by allowing
infinitely many parallel graphs. Let $\KK=\{0,1\}^{\mathbb{N}}$ be
the compact space with the product topology. Everything goes similar
to the previous example except that in the definition of $\FF$ we
only allow finitely many nonzero entries in the vector $x$ to
guarantee that the functions $f_x:\KK\mapsto\{0,1\}$ are continuous.
Limit objects now can be represented by an $\FF$-moment sequence of
measurable functions. Every such function is indexed by a finite
subset of the natural numbers.
\end{example}

\begin{example}[Weighted graphs]\label{EXA:WGRAPH}
Let $\KK\subseteq\R$ be a bounded closed interval. Let $\FF$ be the
collection of mononomial functions $x\mapsto x^j$ for $j=0,1,2,\dots$
on $\KK$; then $\FF$ is a generating system. It is natural to
consider an $\FF$-decorated graph $F$ as a multigraph, and then
$\hom(F,G)$ is the weighted homomorphism as defined e.g.\ in
\cite{FLS}.
\end{example}

\begin{example}[Compact topological groups]\label{EXA:CTGROUPS}
Let $\KK$ be a compact topological group. It is natural to choose
$\FF$ to be the Pontrjagin dual of $\KK$, which is the (discrete)
group of continuous homomorphisms from $\KK$ to $\C$. In the special
case $\KK=\mathbb{R}/\mathbb{Z}$, the dual group is isomorphic to the
integers, so every $\FF$-decorated graph can be considered as graphs
with multiple edges such that negative edge multiplicities are
allowed.
\end{example}

\section{Tools and proofs}

\subsection{Weak regularity partitions}\label{WEAKSZEM}

For a measurable function $W:~[0,1]^2\to \R$ we define its {\em
rectangle norm} by
\begin{equation}\label{SQUAREDEF}
\|W\|_\square=\sup_{{A\subseteq [0,1]}\atop{B\subseteq
[0,1]}}\left|\int_A\int_B W(x,y)\,dx\,dy\right|
\end{equation}
where $A$ and $B$ ranges over all possible measurable subsets of
$[0,1]$. It is easy to see that this norm could be defined by the
formula
\begin{equation}\label{SQUAREALT}
\|W\|_\square= \sup_{0\le f,g \le 1}\left|\int_0^1\int_0^1
W(x,y)f(x)g(y)\right|,
\end{equation}
where $f$ and $g$ are measurable functions. It is not hard to see
that it would not matter much to take the supremum over all functions
with bounded absolute value:
\begin{equation}\label{SQUAREALT-2}
\|W\|_\square\le \sup_{|f|,|g| \le 1}\left|\int_0^1\int_0^1
W(x,y)f(x)g(y)\right| \le 4 \|W\|_\square.
\end{equation}
We note that the supremum in the middle is the $L_\infty\to L_1$
operator norm of $W$ as a kernel operator.

The following ``weak'' version of Szemer\'edi's Regularity Lemma was
proved (in the context of matrices) by Frieze and Kannan \cite{FK}
(see \cite{LSz2} for this analytic formulation):

\begin{lemma}\label{wszem}
For every $\eps>0$ there is a constant $k=k(\eps)$ such that for
every symmetric measurable function $W\in\WW$ there is a partition
$\PP=\{P_1,P_2,\dots,P_k\}$ of $[0,1]$ into $k$ measurable subsets
and a stepfunction $W'$ which is constant on the sets $P_i\times P_j$
such that
\[
\|W-W'\|_{\square}\leq\eps\|W\|_\infty.
\]
\end{lemma}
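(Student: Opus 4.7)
The plan is to prove this by an energy-increment argument in the spirit of Frieze and Kannan. For a partition $\PP=\{P_1,\dots,P_m\}$ of $[0,1]$, let $W_\PP$ denote the stepfunction obtained by averaging $W$ on each rectangle $P_i\times P_j$; equivalently, $W_\PP$ is the orthogonal projection in $L^2([0,1]^2)$ of $W$ onto the subspace $V_\PP$ of functions constant on the blocks $P_i\times P_j$. Being a projection, it satisfies $\|W_\PP\|_2\le \|W\|_2\le \|W\|_\infty$. This $W_\PP$ will be our candidate stepfunction, and the strategy is to refine $\PP$ repeatedly until $\|W-W_\PP\|_\square$ is small.

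I would start from the trivial partition $\PP_0=\{[0,1]\}$ and iterate. At stage $t$, if $\|W-W_{\PP_t}\|_\square\le \eps\|W\|_\infty$ we stop. Otherwise, by definition of the rectangle norm there exist measurable sets $A_t,B_t\subseteq[0,1]$ with
\[
\left|\int_{A_t}\int_{B_t}(W-W_{\PP_t})(x,y)\,dx\,dy\right|>\eps\|W\|_\infty,
\]
and I form $\PP_{t+1}$ by intersecting each part of $\PP_t$ with the four sets $A_t\cap B_t$, $A_t\cap B_t^c$, $A_t^c\cap B_t$, $A_t^c\cap B_t^c$, producing a partition with $|\PP_{t+1}|\le 4|\PP_t|$.

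The heart of the argument is the energy-increment bound $\|W_{\PP_{t+1}}\|_2^2\ge \|W_{\PP_t}\|_2^2+\eps^2\|W\|_\infty^2$. To obtain it, note that $\one_{A_t}\otimes\one_{B_t}\in V_{\PP_{t+1}}$, and let $h$ be the component of $\one_{A_t}\otimes\one_{B_t}$ orthogonal to $V_{\PP_t}$; then $h\in V_{\PP_{t+1}}\ominus V_{\PP_t}$ and $\|h\|_2\le \|\one_{A_t}\otimes\one_{B_t}\|_2\le 1$. Because $W-W_{\PP_t}\perp V_{\PP_t}$,
\[
\left\langle W-W_{\PP_t},\,\one_{A_t}\otimes\one_{B_t}\right\rangle=\langle W-W_{\PP_t},h\rangle,
\]
and Cauchy--Schwarz together with the Pythagorean identity $\|W_{\PP_{t+1}}\|_2^2-\|W_{\PP_t}\|_2^2=\|W_{\PP_{t+1}}-W_{\PP_t}\|_2^2$ give the claimed increment.

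Since $\|W_{\PP_t}\|_2^2\le \|W\|_\infty^2$ for every $t$, the iteration must terminate after at most $T\le 1/\eps^2$ steps, producing a partition into at most $k(\eps)=4^{\lceil 1/\eps^2\rceil}$ parts with the required property. The main point to verify carefully is the energy-increment inequality — specifically, that the witness $\one_{A_t}\otimes\one_{B_t}$ genuinely lies in $V_{\PP_{t+1}}$ (this is why we refine by both $A_t$ and $B_t$) and that the orthogonality $W-W_{\PP_t}\perp V_{\PP_t}$ lets us replace $\one_{A_t}\otimes\one_{B_t}$ by its orthogonal component $h$. Everything else is standard Hilbert-space manipulation and bookkeeping of the part count.
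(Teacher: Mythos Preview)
Your energy-increment argument is correct and is precisely the standard Frieze--Kannan proof. Note that the paper does not actually prove this lemma: it is quoted as a known result, with the matrix version attributed to \cite{FK} and the present analytic formulation to \cite{LSz2}. What you have written is essentially the argument one finds in those references, so there is no substantive divergence to report.

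Two minor points of bookkeeping. First, your iteration produces a partition with \emph{at most} $4^{\lceil 1/\eps^2\rceil}$ parts, whereas the statement asks for exactly $k(\eps)$ parts; this is harmless, since one can split parts arbitrarily (or allow empty parts) without affecting the stepfunction $W'=W_{\PP}$. Second, since the hypothesis has $W$ symmetric, you may wish to observe explicitly that $W_{\PP}$ inherits symmetry because the averages over $P_i\times P_j$ and $P_j\times P_i$ coincide. Neither point is a gap.
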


It is easy to see that (at the cost of increasing $k(\eps,d)$) we can
impose additional conditions on the partition $\PP$ and the function
$W'$. We can assume that $\PP$ refines another partition and that the
partition sets have the same measure. We can also assume that
$W'=W_\PP$, where $W_\PP$ is obtained by taking the average of $W$ on
each set $P_i\times P_j$. By iterating this we get the following.

\begin{lemma}\label{WEAK-SZEM-W}
For every $\eps>0$ and natural numbers $t$ and $p$ there is an
integer $k(\eps,t,p)>0$ such that for every partition $\PP$ of
$[0,1]$ into $p$ equal sized measurable subsets and every family of
functions $W_i\in\WW$ $(i=1,2,\dots,t)$, there exists a partition
$\SS=\{S_1.\dots, S_k\}$ of $[0,1]$ into $k=k(\eps,t,p)$ measurable
sets such that

{\rm(1)} each $S_i$ has the same measure $1/k$;

{\rm(2)} $\SS$ is a refinement of the partition $P$;

{\rm(3)} $\|W_i-(W_i)_\SS\|_\square\le\eps \|W_i\|_\infty$ holds for
each $0\leq i\leq t$.
\end{lemma}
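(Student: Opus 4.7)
The plan is a two-stage argument, hinted at by the phrase ``by iterating this'' preceding the statement. First, iterate Lemma \ref{wszem} to handle all $t$ functions jointly with a common partition refining $\PP$ (not yet of equal measure). Second, refine to produce equal-sized parts while controlling the additional loss in cut norm.

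\emph{Stage 1: joint regularization by energy increment.} Set $\PP_0 = \PP$. While some $W_i$ violates $\|W_i - (W_i)_{\PP_j}\|_\square \le \eps_1 \|W_i\|_\infty$ for a suitable $\eps_1 < \eps$ chosen below, I would pick measurable $A,B\subseteq[0,1]$ witnessing this and let $\PP_{j+1}$ be the common refinement of $\PP_j$ with $\{A,A^c\}$ and $\{B,B^c\}$, at most quadrupling the number of parts. The standard Frieze--Kannan energy-increment estimate gives $\|(W_i)_{\PP_{j+1}}\|_2^2 \ge \|(W_i)_{\PP_j}\|_2^2 + \eps_1^2\|W_i\|_\infty^2$. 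Since the total energy $\sum_{i=1}^t \|(W_i)_{\PP_j}\|_2^2$ is monotone non-decreasing under refinement and bounded above by $\sum_i\|W_i\|_\infty^2$, and each corrective refinement increments it by at least $\eps_1^2\min_i\|W_i\|_\infty^2$ (the cases $\|W_i\|_\infty=0$ being trivial), the process halts after a bounded number of rounds depending only on $\eps_1$ and $t$. The output is a partition $\QQ$ refining $\PP$ into $k' = k'(\eps_1,t,p)$ measurable parts (of possibly unequal measure) with $\|W_i - (W_i)_\QQ\|_\square \le \eps_1\|W_i\|_\infty$ for every $i$.

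\emph{Stage 2: equalizing part sizes and comparing cut norms.} Fix a large integer $N$ and aim for $\SS$ with $k = pN$ equal-measure parts refining $\PP$. Inside each $\PP$-block $P_j$ (measure $1/p$), subdivide each $\QQ$-part $Q\subseteq P_j$ into $\lfloor |Q|\cdot k\rfloor$ equal pieces of measure $1/k$ with a remainder of measure $<1/k$; then pool the at most $k'$ remainders in $P_j$ and repartition them into further pieces of measure $1/k$. The result $\SS$ refines $\PP$, has $pN$ parts of equal measure $1/k$, and the ``bad'' set $E\subseteq[0,1]$ of points lying in $\SS$-parts that straddle two $\QQ$-parts has measure at most $k'/k$. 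Using that $\SS$-averaging is a contraction in the cut norm (immediate from the alternative formula \eqref{SQUAREALT}), one has $\|W_i - (W_i)_\SS\|_\square \le 2\|W_i - W_i'\|_\square$ for any $\SS$-step function $W_i'$. Choosing $W_i'$ to equal $(W_i)_\QQ$ on every $\SS$-part contained in a single $\QQ$-part and zero on the straddling $\SS$-parts yields
\[
\|W_i - W_i'\|_\square \le \|W_i - (W_i)_\QQ\|_\square + 2\|W_i\|_\infty |E| \le \eps_1\|W_i\|_\infty + 2\|W_i\|_\infty\cdot k'/k.
\]
Setting $\eps_1 = \eps/4$ and $N$ large enough that $4k'/(pN)\le \eps/2$ gives $\|W_i - (W_i)_\SS\|_\square \le \eps\|W_i\|_\infty$, with $k(\eps,t,p) = pN$ bounded in terms of $\eps,t,p$ alone.

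\emph{Main obstacle.} The delicate step is the cut-norm comparison in Stage 2: because the equal-measure partition $\SS$ is not literally a refinement of $\QQ$, one cannot directly conclude $\|W_i - (W_i)_\SS\|_\square \le \|W_i - (W_i)_\QQ\|_\square$, and indeed averaging is not monotone in cut norm under refinement. The bridge step function $W_i'$ circumvents this by exploiting that $\SS$ agrees with $\QQ$ except on the small set $E$, where the uniform $L^\infty$-bound on $W_i$ absorbs the error. Stage 1 is the standard joint Frieze--Kannan argument, and the combinatorial equalization in Stage 2 is routine once the right bridge function is identified.
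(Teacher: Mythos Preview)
Your two-stage plan is correct and is precisely the content the paper suppresses: the paper gives no proof beyond the remark preceding the statement, namely that Lemma~\ref{wszem} can be strengthened to refine a prescribed partition, to have equal-measure classes, and to take $W'=W_\PP$, and that ``iterating this'' over $W_1,\dots,W_t$ yields the result. Your Stage~1 unpacks the iteration as a direct joint energy increment rather than applying the single-function lemma as a black box $t$ times, and your Stage~2 (equalization plus the bridge step function $W_i'$ and the contraction $\|P_\SS U\|_\square\le\|U\|_\square$) is exactly the argument hidden behind ``we can assume the partition sets have the same measure.''

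One small slip in Stage~1: as written, your termination bound is $\bigl(\sum_i\|W_i\|_\infty^2\bigr)\big/\bigl(\eps_1^2\min_i\|W_i\|_\infty^2\bigr)$, which depends on the ratios of the $\|W_i\|_\infty$ and not only on $\eps_1$ and $t$. The fix is immediate and standard: either normalize each nonzero $W_i$ to $\|W_i\|_\infty=1$ at the outset (the target inequality is scale-invariant), or track the energies separately and observe that each fixed $W_i$ can be the violated function at most $1/\eps_1^2$ times, giving at most $t/\eps_1^2$ rounds in total. With that adjustment, $k'(\eps_1,t,p)\le p\cdot 4^{t/\eps_1^2}$ and the rest of your argument goes through verbatim.
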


\subsection{Equivalence of convergence}

We need a few lemmas. For a subset $\mathcal{F}\subseteq\CC$ let
$\mathcal{F}_n$ denote the set of functions on $\KK^n$ whose elements
are of the form
\[
(c_1,c_2,\dots,c_n)\mapsto\prod_{i=1}^n f_i(c_i)
\]
with each $f_i$ in $\mathcal{F}$. It is clear that the elements of
$\mathcal{F}_n$ are continuous functions from $\KK^n$ to
$\mathbb{R}$.

\begin{lemma}\label{dgen1}
$\CC_n$ is a generating system on $\KK^n$.
\end{lemma}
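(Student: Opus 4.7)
The plan is to identify the linear span of $\CC_n$ as an algebra of continuous functions on the compact Hausdorff space $\KK^n$ and then invoke the Stone--Weierstrass theorem. Let $\AA$ denote the linear span of $\CC_n$, regarded as a subspace of the algebra of continuous real-valued functions on $\KK^n$. By definition, to say that $\CC_n$ is a generating system is precisely to say that $\AA$ is dense in $\mathcal{C}(\KK^n)$ in the uniform norm, and $\KK^n$ is compact Hausdorff as a product of copies of $\KK$, so Stone--Weierstrass is the natural tool.

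First I would check that $\AA$ is a subalgebra. Since the pointwise product of two pure tensors
\[
\left(\prod_{i=1}^n f_i(c_i)\right)\left(\prod_{i=1}^n g_i(c_i)\right) = \prod_{i=1}^n (f_ig_i)(c_i)
\]
is again a pure tensor (using that $f_ig_i\in\CC$), it follows by bilinearity that a product of two elements of $\AA$ lies in $\AA$. The constant function $1$ belongs to $\CC_n$ by taking every $f_i\equiv 1$, so $\AA$ contains the constants.

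Next I would verify that $\AA$ separates points of $\KK^n$. Given two distinct points $(c_1,\dots,c_n)\ne(c'_1,\dots,c'_n)$, some coordinate $j$ has $c_j\ne c'_j$. Because $\KK$ is a compact Hausdorff space, Urysohn's lemma produces a continuous function $f:\KK\to\R$ with $f(c_j)\ne f(c'_j)$. The function $(c_1,\dots,c_n)\mapsto f(c_j)\cdot\prod_{i\ne j}1$ lies in $\CC_n\subseteq\AA$ and separates the two points.

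With these three properties in hand, the Stone--Weierstrass theorem yields that $\AA$ is uniformly dense in $\mathcal{C}(\KK^n)$, which is exactly the assertion that $\CC_n$ is a generating system on $\KK^n$. There is no real obstacle here; the only point requiring care is the closure of $\AA$ under multiplication, which hinges on the elementary but crucial identity $\bigl(\bigotimes_i f_i\bigr)\bigl(\bigotimes_i g_i\bigr)=\bigotimes_i(f_ig_i)$ noted above.
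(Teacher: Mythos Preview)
Your proof is correct and follows essentially the same approach as the paper: verify that the linear span of $\CC_n$ is a subalgebra containing the constants, check point separation via a single-coordinate function, and apply Stone--Weierstrass. The paper's version is terser (it asserts the algebra property and the existence of a separating $f\in\CC$ without spelling out the tensor identity or invoking Urysohn explicitly), but the argument is the same.
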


\begin{proof}
First of all observe that the liner space generated by $\CC_n$ is an
algebra of continuous functions (containing the constant $1$
function) on $\KK^n$. Using Stone-Weierstrass Theorem it is enough to
show that $\CC_n$ is a separating set. Let $c=(c_1,c_2,\dots,c_n)$
and $d=(d_1,d_2,\dots,d_n)$ be two distinct elements on $\KK^n$ such
that $c_i\neq d_i$. Then there is a continuous function $f$ with
$f(c_i)\neq f(d_i)$ on $\KK$. It is clear that the function
$\hat{f}(x_1,x_2,\dots,x_n)=f(x_i)$ is in $\CC_n$ and it separates
$c$ form $d$.
\end{proof}

\begin{lemma}\label{dgen2}
If $\mathcal{F}$ is a generating system on $\KK$ then so is
$\mathcal{F}_n$ on $\KK^n$.
\end{lemma}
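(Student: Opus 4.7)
The plan is to chain two approximations: first use Lemma \ref{dgen1} to reduce from arbitrary continuous functions on $\KK^n$ to pure products of continuous functions on the factors, and then use the density assumption on $\FF$ to replace each factor by an $\FF$-combination. Since the linear span of $\CC_n$ is already dense in $\CC(\KK^n)$ by Lemma \ref{dgen1}, it suffices to prove that every product $(c_1,\ldots,c_n)\mapsto\prod_{i=1}^n f_i(c_i)$ with $f_i\in\CC$ lies in the uniform closure of the linear span of $\FF_n$.

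To approximate one such product, given $\eps>0$ I would set $M=1+\max_i\|f_i\|_\infty$ and pick $\delta\le\eps/(nM^{n-1})$. By the generating assumption, each $f_i$ admits a linear combination $\tilde f_i=\sum_{j}\alpha_{ij}g_{ij}$ with $g_{ij}\in\FF$ and $\|f_i-\tilde f_i\|_\infty<\delta$; in particular $\|\tilde f_i\|_\infty\le M$. The telescoping identity
\[
\prod_{i=1}^n f_i(c_i)-\prod_{i=1}^n \tilde f_i(c_i)=\sum_{i=1}^n f_1(c_1)\cdots f_{i-1}(c_{i-1})\bigl(f_i(c_i)-\tilde f_i(c_i)\bigr)\tilde f_{i+1}(c_{i+1})\cdots\tilde f_n(c_n)
\]
then yields $\bigl\|\prod_i f_i-\prod_i \tilde f_i\bigr\|_\infty\le nM^{n-1}\delta\le\eps$ in the sup norm on $\KK^n$. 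Expanding the product $\prod_i\tilde f_i$ distributively produces a finite linear combination of functions $(c_1,\ldots,c_n)\mapsto\prod_i g_{i,j_i}(c_i)$, each of which lies in $\FF_n$ by definition. This shows every element of $\CC_n$ is uniformly approximable by linear combinations from $\FF_n$, and combining with Lemma \ref{dgen1} (together with a routine $\eps/2$ argument) gives that the linear span of $\FF_n$ is dense in $\CC(\KK^n)$.

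The only genuine content beyond bookkeeping is the multiplicative stability of uniform approximation, which is handled by the telescoping identity above; everything else is standard. I expect no real obstacles.
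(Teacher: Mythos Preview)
Your proposal is correct and follows essentially the same route as the paper: invoke Lemma \ref{dgen1} to reduce to approximating a single product $\prod_i f_i(c_i)$ with $f_i\in\CC$, then replace each factor by a finite $\FF$-combination and control the error multiplicatively. Your telescoping bound with the explicit choice $\delta\le\eps/(nM^{n-1})$ makes precise what the paper leaves as ``letting $\epsilon_2$ go to $0$,'' but the argument is the same.
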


\begin{proof}
Let $f:\KK^n\mapsto\mathbb{R}$ be an arbitrary continuous function.
Then by Lemma \ref{dgen1} for every $\epsilon>0$ there is a finite
set of continuous functions $f_{i,j}$ on $\KK$ such that
\[
\bigl|\sum_{i=1}^k\prod_{j=1}^n
f_{i,j}(c_j)-f((c_1,c_2,\dots,c_n))\bigr|\leq\epsilon
\]
for every $(c_1,c_2,\dots,c_n)$ in $\KK^n$. Note that in this formula
we don't need linear coefficients since they can be ``merged'' into
the terms $f_{i,j}$. Using that $\mathcal{F}$ is a generating system
we have that for every $\epsilon_2>0$ there is finite system of
functions $\{g_i\}_{i=1}^r$ in $\mathcal{F}$ and real numbers
$\lambda_{i,j,k}$ such that
\[
|f_{i,j}-\sum_{k=1}^r\lambda_{i,j,k}g_k|\leq\epsilon_2
\]
everywhere on $\KK$ and for every $i,j$. Now replacing every
$f_{i,j}$ by its approximation with the functions $g_i$ we get an
approximation of $f$ with linear combinations of elements from
$\mathcal{F}_n$ with a precision arbitrary close to $\epsilon$ if we
let $\epsilon_2$ go to $0$. This completes the proof.
\end{proof}

Now we are ready to prove Theorem \ref{convequiv}. First we observe
that the distributional convergence of $\mathbb{G}(G_i,k)$ implies
that for any graph $F$ in $\tilde\GG_k(\mathcal{F})$ the sequence
$t(F,G_i)$ is convergent. This follows immediately from the
definition of $t(F,G_i)$ since $t(F,G_i)=E(L(\mathbb{G}(G_i,k)))$ for
some function $L$ in $\mathcal{F}_{{k}\choose{2}}$ whose components
are the edge weights in $F$.

The other direction follows from Lemma \ref{dgen2} since the
functions $L$ occurring in the formula above form a generating
system.

\subsection{Function sequences}

An indexed set $s=(s_f:~f\in \FF)$, where $s_f:~[0,1]^2\to\R$ is a
bounded symmetric measurable function for each $f$ will be called an
{\it $\FF$-indexed function sequence}. Let $\SS$ be the set of all
$\FF$-indexed function sequences. For every $\FF$-decorated graph $F$
and $s\in \SS$, we can define the ``homomorphism density''
\[
t(F,s)= \int_{x\in [0,1]^{V(F)}}~\prod_{1\leq i<j\leq k}
s_{F_{i,j}}(x_i,x_j)\,dx.
\]

The next lemma shows the relation between the $\|.\|_{\square}$-norm
and the homomorphism densities into a sequence of functions.

\begin{lemma}\label{HOMSQUARE}
Let $u=(u_f:~f\in\FF)$ and $w=(w_f:~f\in\FF)$ be two indexed sets of
functions in $\WW$, and let $d_f=\max(\|u_f\|_\infty,
\|w_f\|_\infty)$. Then for every $\FF$-decorated graph $F$,
\[
|t(F,u)-t(F,w)|\le 4\Bigl(\prod_{ij\in E(F)}
d_{F_{i,j}}\Bigr)\sum_{ij\in E(F)}
\|u_{F_{i,j}}-w_{F_{i,j}}\|_\square.
\]
\end{lemma}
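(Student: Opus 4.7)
\medskip

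The plan is to reduce the inequality to $e(F)$ applications of the rectangle-norm duality \eqref{SQUAREALT-2}, via an edge-by-edge telescoping of the product that defines $t(F,\cdot)$.

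First I would enumerate $E(F)=\{e_1,\dots,e_m\}$, write $e_l=\{i_l,j_l\}$, and introduce intermediate families $s^{(0)},\dots,s^{(m)}$ with $s^{(l)}_{F_{e_p}}=u_{F_{e_p}}$ for $p\le l$ and $s^{(l)}_{F_{e_p}}=w_{F_{e_p}}$ for $p>l$, so that $s^{(0)}$ and $s^{(m)}$ agree with $w$ and $u$ respectively on every label appearing on an edge of $F$. Telescoping gives
\[
t(F,u)-t(F,w)=\sum_{l=1}^{m}\bigl(t(F,s^{(l)})-t(F,s^{(l-1)})\bigr),
\]
and the $l$-th difference equals
\[
\int_{[0,1]^{V(F)}}\bigl(u_{F_{e_l}}-w_{F_{e_l}}\bigr)(x_{i_l},x_{j_l})\prod_{e\ne e_l}S_e\,dx,
\]
where $e$ ranges over $E(F)\setminus\{e_l\}$ and each factor $S_e$ on an edge $e=\{k,k'\}$ is either $u_{F_e}(x_k,x_{k'})$ or $w_{F_e}(x_k,x_{k'})$, hence satisfies $\|S_e\|_\infty\le d_{F_e}$.

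To estimate the $l$-th term I would freeze every coordinate $x_k$ with $k\notin\{i_l,j_l\}$ and exploit that $F$ carries only one label on the pair $\{i_l,j_l\}$. For any fixed choice of the frozen coordinates, the remaining factors split into three groups according to whether the corresponding edge meets $\{i_l,j_l\}$ in $\{i_l\}$, in $\{j_l\}$, or in $\emptyset$; these collapse respectively to a function $f(x_{i_l})$, a function $g(x_{j_l})$, and a scalar $C$, whose sup-norms multiply to at most $\prod_{e\ne e_l}d_{F_e}$. Invoking \eqref{SQUAREALT-2},
\[
\Bigl|\int_{[0,1]^2}\bigl(u_{F_{e_l}}-w_{F_{e_l}}\bigr)(x,y)\,f(x)g(y)\,dx\,dy\Bigr|\le 4\|f\|_\infty\|g\|_\infty\|u_{F_{e_l}}-w_{F_{e_l}}\|_\square,
\]
so the inner integral is bounded by $4\bigl(\prod_{e\ne e_l}d_{F_e}\bigr)\|u_{F_{e_l}}-w_{F_{e_l}}\|_\square$. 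Averaging over the frozen coordinates preserves this uniform estimate, and summing over $l$ produces the inequality of the lemma.

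The main obstacle is precisely the factorization in the middle paragraph: \eqref{SQUAREALT-2} yields a clean $4\|\cdot\|_\square$ bound only for rank-one test functions of the form $f(x)g(y)$, and a generic two-variable integrand paired with $u_{F_{e_l}}-w_{F_{e_l}}$ cannot in general be controlled by the rectangle norm alone. The required factorization into $f(x_{i_l})g(x_{j_l})$ is automatic here because $F$ is simple, so the pair $\{i_l,j_l\}$ carries a unique label, and because Fubini over the mutually independent outside coordinates separates the edges incident to $i_l$ alone from those incident to $j_l$ alone.
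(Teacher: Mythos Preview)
Your proposal is correct and follows essentially the same route as the paper: an edge-by-edge telescoping of the product defining $t(F,\cdot)$, followed by integrating first over the two endpoints of the current edge and invoking \eqref{SQUAREALT-2}. Your write-up is in fact slightly more explicit than the paper's about why the frozen product factors as $f(x_{i_l})g(x_{j_l})\cdot C$ (namely, that each pair of nodes carries a single label), which is exactly the point needed to apply the rank-one bound \eqref{SQUAREALT-2}; the paper compresses this into the phrase ``integrate first the variables $x_{i_t}$ and $x_{j_t}$''. One cosmetic remark: your per-term bound carries $\prod_{e\neq e_l}d_{F_e}$ rather than the full product $\prod_{e}d_{F_e}$ appearing in the statement, so your ``summing over $l$ produces the inequality of the lemma'' is literally a hair sharper than what is claimed; the paper's own proof writes the full product at the same step without comment.
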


\begin{proof}
Let $E(F)=\{i_1j_1,\dots,i_mj_m\}$. We have
\begin{align*}
t(F,U)&-t(F,W)\\&=\int_{[0,1]^n} \Bigl(\prod_{r=1}^{m}
W_{F_{i_r,j_r}}(x_{i_r},x_{j_r})-\prod_{r=1}^{m}
U_{F_{i_r,j_r}}(x_{i_r},x_{j_r})\Bigr)\,dx.
\end{align*}
We can write the telescoping sum
\[
\prod_{r=1}^{m} W_{F_{i_r,j_r}}(x_{i_r},x_{j_r})-\prod_{r=1}^{m}
U_{F_{i_r,j_r}}(x_{i_r},x_{j_r})=\sum_{t=1}^{m} X_t(x_1,\dots,x_n),
\]
where
\begin{align*}
X_t(x_1,\dots,x_n) = & \Bigl( \prod_{r=1}^{t-1}
W_{F_{i_r,j_r}}(x_{i_r},x_{j_r})\Bigr) \Bigl(\prod_{r=t+1}^{m}
U_{F_{i_r,j_r}}(x_{i_r},x_{j_r})\Bigr)\\
&\times(W_{F_{i_t,j_t}}(x_{i_t},x_{j_t})-
U_{F_{i_t,j_t}}(x_{i_t},x_{j_t})).
\end{align*}
To estimate the integral of a given $X_t$ term, let us integrate
first the variables $x_{i_t}$ and $x_{j_t}$; then by
(\ref{SQUAREALT-2}),
\[
\left|\int_0^1\int_0^1 X_t(x_1,\dots,x_n)\,dx_{i_t}\,dx_{j_t} \right|
\le 4
\Bigl(\prod_{r=1}^{m}d_{F_{i,j}}\Bigr)\|U_{F_{i_t,j_t}}-W_{F_{i_t,j_t}}\|_\square,
\]
which completes the proof.
\end{proof}

The $\FF$-indexed function sequences most important for us will be
the $\FF$-moment representations of $\KK$-graphons. An {\it
$\mathcal{F}$-moment sequence} is a family $(a_f:~f\in\FF)$ of the
form
\[
a_f=\int_{\KK} f~d\mu
\]
where $\mu$ is a Borel probability measure on $\KK$. An {\it
$\mathcal{F}$-moment function sequence} is a family $(w_f:~f\in\FF)$
of functions $w_f\in\WW$ such that $(w_f(x,y):~f\in\FF)$ is an
$\mathcal{F}$-moment sequence for all $x,y\in[0,1]$. Clearly the
$\FF$-moment representation of a $\KK$-graphon is an
$\mathcal{F}$-moment function sequence. The next Lemma shows that the
converse also holds.

\begin{lemma}\label{LEM:F-MOM}
For every $\mathcal{F}$-moment function sequence $w$ there is a
$\KK$-graphon $W$ with $\FF$-moment representation $w$.
\end{lemma}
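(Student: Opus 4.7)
The plan is to define $W(x,y)$ pointwise as the unique Borel probability measure on $\KK$ whose $\FF$-moments equal $(w_f(x,y))_{f\in\FF}$, and then to verify the symmetry and Borel measurability demanded by the definition of a $\KK$-graphon.

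For each fixed $(x,y)\in[0,1]^2$, the hypothesis that $w$ is an $\FF$-moment function sequence yields a Borel probability measure $\mu_{x,y}$ on $\KK$ with $\int_\KK f\,d\mu_{x,y}=w_f(x,y)$ for every $f\in\FF$. Since $\FF$ is a generating system, its linear span is dense in $\CC$, so by linearity and continuity the functional $f\mapsto\int f\,d\mu_{x,y}$ is determined on all of $\CC$, and then by the Riesz representation theorem $\mu_{x,y}$ is uniquely determined by its $\FF$-moments. Set $W(x,y):=\mu_{x,y}$. Symmetry is then immediate: $W(x,y)$ and $W(y,x)$ have the same $\FF$-moments by the symmetry of each $w_f$, so uniqueness forces $W(x,y)=W(y,x)$.

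The main task, and the only real obstacle, is Borel measurability of $W\colon[0,1]^2\to\mathcal{P}(\KK)$. Since $\KK$ is compact second-countable Hausdorff it is metrizable, $\CC$ is separable in the sup norm, and $\mathcal{P}(\KK)$ with the weak topology is a compact metrizable space whose Borel structure is generated by the continuous functionals $\mu\mapsto\int f\,d\mu$ as $f$ ranges over any countable dense subset of $\CC$. Hence it suffices to show that $W_f(x,y)=\int_\KK f\,dW(x,y)$ is Borel measurable for every $f\in\CC$. For $f\in\FF$ this holds by hypothesis; for an arbitrary $f\in\CC$, pick finite linear combinations $g_n\in\mathrm{span}(\FF)$ with $\|g_n-f\|_\infty\to 0$. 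Each $W_{g_n}$ is then a finite linear combination of the measurable functions $w_h$ ($h\in\FF$), and since each $W(x,y)$ is a probability measure, $\|W_{g_n}-W_f\|_\infty\le\|g_n-f\|_\infty\to 0$. Thus $W_f$ is a uniform limit of measurable functions and hence measurable, which completes the construction.
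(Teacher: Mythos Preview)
Your proof is correct and follows essentially the same route as the paper's: define $W(x,y)$ pointwise from the moment sequence and then verify Borel measurability by approximating an arbitrary $f\in\CC$ with finite linear combinations from $\FF$. Your presentation is slightly cleaner in that you argue directly via the uniform convergence $W_{g_n}\to W_f$ (hence measurability of $W_f$) and you explicitly handle uniqueness and symmetry, whereas the paper phrases the same approximation at the level of the sets $A_g=\{(x,y):\int g\,dW(x,y)\ge 0\}$.
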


\begin{proof}
Since $w(x,y)$ is an $\FF$-moment sequence for all $x,y\in[0.1]$,
there is a probability measure $W(x,y)$ such that $\int_\KK
f\,dW(x,y)=w_f(x,y)$ for all $f\in\FF$. To prove that $W$ is a
$\KK$-graphon, we have to check that it is measurable as a map
$[0,1]^2\to\PP(\KK)$. Since the Borel sets in $\PP(\KK)$ are
generated by the sets $\{\mu:~\int_\KK g\,d\mu\ge 0$ ($g\in\CC$), it
suffices to check that the sets
\[
A_g=\{(x,y):~\int_\KK g\,dW(x,y) \ge 0
\]
are measurable for all $g\in\CC$. Since $\FF$ is generating, we have
functions $g_n\in\CC$ such that $\|g_n-g\|_\infty\le 1/n$ and $g_n$
is in the linear hull of $\FF$. Clearly $A_g = \cup_n A_{g_n+1/n}$,
so it suffices to show that $A_{g_n+1/n}$ is measurable. Let
$g_n=\sum_{k=1}^N \alpha_k f_k$, where $f_k\in\FF$ and
$\alpha_k\in\R$. Then
\[
\int_\KK \Bigl(g_n+\frac1n\bigr)\,dW(x,y) = \frac1n + \sum_{k=1}^N
\alpha_k \int_\KK f_k\,dW(x,y) = \frac1n + \sum_{k=1}^N w_{f_k}(x,y)
\]
is a measurable function of $(x,y)$, which proves that $A_{g_n+1/n}$
is measurable.
\end{proof}

\subsection{Simultaneous convergence}

The following Lemma is our main tool for constructing limit objects.

\begin{lemma}\label{simcon}
Let $\FF\subseteq\CC$ be a countable generating system. Let $Q$ be a
compact convex subset of $\R^\FF$, and let $s_1,s_2,\dots\in\SS$ be a
sequence such that $s_n(x,y)\in Q$ for $n=1,2,\dots$. Assume that
there are reals $d_f>0$ $(f\in\FF)$ such that $|(s_n)_f(x,y)|\le d_f$
for all $n,x$ and $y$. Furthermore, assume that
$(t(F,s_n):~n=1,2,\dots)$ is a convergent sequence for every
$F\in\GG(\FF)$. Then there is a sequence $w\in\SS$ such that
$w(x,y)\in Q$ for all $x,y$ and $t(F,s_n)\to t(F,w)$ for every
$\FF$-decorated graph $F$.
\end{lemma}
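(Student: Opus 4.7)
The plan is a weak regularity + compactness + martingale argument, combined with a diagonal extraction. Enumerate $\FF=\{f_1,f_2,\dots\}$. For each $n$ and each $k$, iteratively apply Lemma \ref{WEAK-SZEM-W} to the bounded symmetric functions $(s_n)_{f_1},\dots,(s_n)_{f_k}$ with error $1/k$, obtaining a nested sequence of partitions $\PP_n^1,\PP_n^2,\dots$ of $[0,1]$ in which $\PP_n^k$ consists of $m_k$ equal parts (with $m_k$ depending only on $k$) and
\[
\|(s_n)_{f_i}-((s_n)_{f_i})_{\PP_n^k}\|_\square\le d_{f_i}/k\qquad(i\le k).
\]
Homomorphism densities, the norms $\|\cdot\|_\square$ and $\|\cdot\|_\infty$, and pointwise membership in $Q$ are all invariant under measure-preserving bijections of $[0,1]$, so we may pull each $s_n$ back through a suitable bijection and assume $\PP_n^k=\QQ^k:=\{[(j-1)/m_k,j/m_k]\}_{j=1}^{m_k}$ simultaneously for every $n$ and $k$; this is possible because the $\PP_n^k$ are nested with the same combinatorial structure as the $\QQ^k$.

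Let $s^{n,k}_f$ denote the $\QQ^k$-average of $(s_n)_f$: it is constant on each block of $\QQ^k\times\QQ^k$ with value in $[-d_f,d_f]$, and convexity of $Q$ gives $s^{n,k}(x,y)\in Q$. A diagonal extraction over the countable family of triples (coordinate $f$, level $k$, block of $\QQ^k\times\QQ^k$) yields a subsequence of $(s_n)$ along which $s^{n,k}_f(x,y)\to w^k_f(x,y)$ as $n\to\infty$ for all $f$, $k$, and $(x,y)$. Closedness of $Q$ forces $w^k(x,y)\in Q$. Because $\QQ^{k+1}$ refines $\QQ^k$ and $s^{n,k}_f=\E(s^{n,k+1}_f\mid\QQ^k\times\QQ^k)$, the same identity survives in the limit, so $(w^k_f)_k$ is a bounded martingale on $[0,1]^2$ for each $f$. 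Doob's theorem then supplies a symmetric measurable limit $w_f$ with $w^k_f\to w_f$ almost everywhere and in $L^1([0,1]^2)$. Intersecting the countably many co-null convergence sets over $f\in\FF$ and invoking closedness of $Q$ gives $w(x,y)\in Q$ for almost every $(x,y)$; modifying $w$ on the (symmetric) null set to take a fixed element of $Q$ yields $w\in\SS$ with $w(x,y)\in Q$ pointwise.

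For the density convergence, given $F\in\GG(\FF)$ split
\[
|t(F,s_n)-t(F,w)|\le|t(F,s_n)-t(F,s^{n,k})|+|t(F,s^{n,k})-t(F,w^k)|+|t(F,w^k)-t(F,w)|.
\]
Lemma \ref{HOMSQUARE} bounds the first term by $4\bigl(\prod_{ij\in E(F)}d_{F_{ij}}\bigr)\sum_{ij\in E(F)}\|(s_n)_{F_{ij}}-(s^{n,k})_{F_{ij}}\|_\square=O_F(1/k)$ once $n\ge k$; the third is bounded analogously using $\|w^k_f-w_f\|_\square\le\|w^k_f-w_f\|_1\to 0$; the middle term tends to $0$ as $n\to\infty$ with $k$ fixed since $s^{n,k}_f\to w^k_f$ pointwise on finitely many blocks. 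Choosing $k$ then $n$ large enough along the subsequence forces $t(F,s_{n_j})\to t(F,w)$, and convergence of $(t(F,s_n))_n$ then propagates to the full sequence.

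The main obstacle is orchestrating the weak regularity partitions consistently across $n$, $k$, and the coordinates $f\in\FF$ so that the pointwise subsequential limits assemble into a genuine martingale; without iterated application of Lemma \ref{WEAK-SZEM-W} and without the common standard partition $\QQ^k$ obtained by measure-preserving realignment, the diagonal would produce only incoherent limits and no refinement structure to exploit. A secondary but delicate subtlety is upgrading the almost-everywhere constraint $w(x,y)\in Q$ into the pointwise constraint demanded by the statement, handled by the closedness of $Q$ together with a null-set modification preserving measurability and symmetry.
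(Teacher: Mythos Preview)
Your proposal is correct and follows essentially the same strategy as the paper's proof: iterate the weak regularity lemma (Lemma~\ref{WEAK-SZEM-W}) to produce, for each $n$, a nested chain of equipartitions; realign by measure-preserving maps so the partitions are the standard interval partitions independent of $n$; extract a diagonal subsequence on which the block averages converge at every level; invoke bounded martingale convergence to obtain $w$; and finish with the three-term splitting controlled by Lemma~\ref{HOMSQUARE}. The only cosmetic slip is the phrase ``once $n\ge k$'' in your first error bound: the inequality $\|(s_n)_{f_i}-s^{n,k}_{f_i}\|_\square\le d_{f_i}/k$ holds for every $n$ and every $i\le k$, so what you actually need is that $k$ be large enough (depending on $F$) for all the labels $F_{ij}$ to appear among $f_1,\dots,f_k$.
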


\begin{proof}
Let $\FF=\{f_1,f_2,\dots\}$. For each $t\ge 1$, define $h(t)$
recursively by $h(1)=1$ and
\[
h(t)= k\Bigl(\frac1{t\max\{d_{f_i}:~1\le i\le t\}},t,h(t-1)\Bigr),
\]
where $k$ is the function in Lemma \ref{WEAK-SZEM-W}.

For each $t\geq 1$ we construct a partition $\PP_t$ of $[0,1]$ into
$h(t)$ intervals of equal length, and a subsequence $Q_t$ of the
natural numbers by recursion as follows. Let $\PP_1=\{[0,1]\}$ and
$Q_1=\N$. For each $n$, let $\PP_{t,n}$ be a partition refining
$\PP_{t-1}$ with $h(t)$ partition classes given by Lemma
\ref{WEAK-SZEM-W}, when applied to the sequence
$((s_n)_{f_1},\dots,(s_n)_{f_t})$ and $\eps=1/t$. We can apply, for
each $n$, a measure preserving transformation to $[0,1]$ so that
$\PP_{t,n}$ becomes a partition $\PP_t$ into intervals. Since
$\PP_{t,n}$ is a refinement of $\PP_{t-1}$, we can do this
transformation so that $\PP_{t-1}$ remains a partition into
intervals.

Let $v_{f,n,t}=((s_n)_f)_{\PP_t}$ and
$v_{n,t}=(v_{f,n,t}:~f\in\FF)\in\SS$. We have by Lemma
\ref{WEAK-SZEM-W}
\begin{equation}\label{EQ:SNP}
\| v_{f,n,t} - (s_n)_f\|_\square \le \frac1t \|(s_n)_f\|_\infty\le
\frac1t d_f.
\end{equation}

Let $Q_t$ be an infinite subsequence of $Q_{t-1}$ such that for every
$1\leq a,b \leq h(t)$ and $f\in \FF_t$, the functions $v_{f,n,t}$
converge to a function $u_{f,t}$ for $n\to\infty$, $n\in Q_t$. Such a
subsequence can be selected since the functions $v_{f,n,t}$ are
stepfunctions with a fixed partition and they are uniformly bounded.
Let $u_t=(u_{f,t}:~f\in\FF)\in\SS$.

We claim that the functions $u_{f,t}$ converge to some symmetric
measurable function $w_f$ almost everywhere if $t\to\infty$. This
follows from the properties that $(u_{f,t})_{\PP_t}=u_{f,t-1}$ and
$|u_{f,t}|\leq d_f$. Using the convergence theorem of bounded
martingales, one gets the convergence as in \cite{LSz1}.

Let $w=(w_f:~f\in\FF)$. It is clear that $w(x,y)\in Q$ for almost all
$x,y\in[0,1]$, and we may change the limit functions on a set of
measure $0$ so that this holds everywhere. For any $\FF$-decorated
graph $F$ with $n$ nodes, there is a real number $A_F$ such that
\begin{equation}\label{EQ:SNA}
t(F,s_n)\to A_F\qquad (n\to\infty).
\end{equation}
We also have, trivially,
\begin{equation}\label{eq123}
t(F, u_t)\to t(F,w) \qquad (t\to\infty),
\end{equation}
and
\begin{equation}\label{EQ:FUV}
t(F,v_{n,t})\to t(F,u_t)\qquad (n\to\infty).
\end{equation}
By Lemma \ref{HOMSQUARE} and inequality \eqref{EQ:SNP} we obtain that
\begin{align*}
|t(F,v_{n,t})&-t(F,s_n)|\\
&\leq 4\Bigl(\prod_{ij\in E(F)} d_{F_{i,j}}\Bigr)\sum_{ij\in E(F)}
\|v_{F_{i,j},n,t}-(s_n)_{F_{i,j}}\|_\square\le \frac1t d_{F_{i,j}}.
\end{align*}
Let $n\to\infty$ $(n\in Q_t)$, then the left hand side tends to
$|t(F,u_t)- A_F|$ by \eqref{EQ:FUV} and \eqref{EQ:SNA}. Letting
$t\to\infty$, \eqref{eq123} implies that $A_F= t(F,w)$ as claimed.
\end{proof}

Now we are ready to complete the proof of our main theorem.

\begin{proof*}{Theorem \ref{convgen1}}
Let $Q\subseteq\mathbb{R}^{\FF}$ denote the set of all
$\mathcal{F}$-moment sequences. Clearly, $Q$ is a compact, convex set
in $\R^\FF$.

Let $s_n\in\SS$ be the $\FF$-moment representation of $W_n$. Applying
Lemma \ref{simcon}, we get that there is a $w\in\SS$ such that
$w(x,y)$ is an $\FF$-moment sequence for all $x,y\in[0,1]$, and
$t(F,W_n)\to t(F,w)$ for all $\FF$-decorated graph $F$. The
$\FF$-moment function sequence $w$ defines a $\KK$-graphon $W$ by
Lemma \ref{LEM:F-MOM}, which proves the Theorem.
\end{proof*}

\end{document}